\newtheorem{theorem}{Theorem}[section]
\newtheorem*{THMX}{Main Theorem}
\newtheorem{proposition}[theorem]{Proposition}
\newtheorem{lemma}[theorem]{Lemma}
\theoremstyle{definition}
\providecommand{\A}{\mathcal A}
\title{Bounded type interval exchange maps}
\author{Dong Han Kim}
\address{Department of Mathematics Education, Dongguk University-Seoul, Seoul 100-715, Korea.}
\email{kim2010@dongguk.edu}
\thanks{This work was partially supported by the National Research Foundation of Korea(NRF) (2012R1A1A2004473).}
\author{Stefano Marmi}
\address{Scuola Normale Superiore, Piazza dei Cavalieri 7, 56126 Pisa, Italy}
\email{s.marmi@sns.it}
\subjclass[2000]{37E05, 11J70}
\keywords{interval exchange map, bounded type, recurrence time, Rauzy-Veech induction}
\begin{document}

\begin{abstract} 
Irrational numbers of bounded type have several equivalent characterizations. 
They have bounded partial quotients in terms of arithmetic characterization
and in the dynamics of the circle rotation, the rescaled recurrence time to $r$-ball of the initial point is bounded below.
In this paper, we consider how the bounded type condition of irrational is generalized into interval exchange maps.
\end{abstract}

\maketitle

\section{Introduction}

An irrational $\theta$ is said to be of bounded type (or constant type) if its partial quotients $(a_k)$ of the continued fraction expansion are bounded.
This condition is equivalent that there exists a constant $c > 0$ such that
$$ \liminf_{n \to \infty} n \cdot \| n\theta \| = \liminf_{n \to \infty} n \cdot |T_\theta^n(x) - x | \ge c,$$ 
where $\| \cdot \|$ denotes the distance to its nearest integer and $T_\theta$ is the rotation by $\theta$ on the unit circle.
In this paper, we investigate the bounded type condition for the interval exchange map.

An interval exchange map (i.e.m.) is determined by combinatorial data and length data.
The combinatorial data consists of a finite set $\A$ for the
subintervals and of two bijections $(\pi_t, \pi_b)$ from $\A$ onto $\{1,\ldots ,d\}$ ($|\A| = d$): these indicate
in which order the intervals are met before and after the map.
The length data  $(\lambda_\alpha)_{\alpha\in\mathcal{ A}}$ give the length
$\lambda_\alpha >0$ of the corresponding interval.

We set $$ 
p_\alpha := \sum_{\pi_t(\beta) < \pi_t (\alpha)} \lambda_\beta,
\quad q_\alpha := \sum_{\pi_b(\beta) < \pi_b(\alpha)} \lambda_\beta, 
\quad \lambda^* = \sum_\alpha \lambda_\alpha.$$
The i.e.m.\ $T$ associated to these data is defined as
$$ T(x) = x  - p_\alpha + q_\alpha \ \text{ for } x \in I_\alpha := [p_\alpha, p_\alpha + \lambda_\alpha).$$

In the following, we will consider only combinatorial data $(\mathcal{ A} , \pi_t, \pi_b)$
which are {\it admissible}, i.e., 
$ \pi_t^{-1}(\{1,\ldots ,k\})\ne \pi_b^{-1}(\{1, \ldots ,k\})$ for $1 \le k < d$.
Moreover, we will assume our maps to have the {\it Keane property}:
if there exist $\alpha ,\beta \in \A$ and integer $m$ such that $T^m (p_\alpha)= p_\beta$ and $\pi_t (\beta )>1$.
The Keane property is the appropriate notion of irrationality for i.e.m.\cite{Ke1}.

For admissible interval exchange maps with the Keane property we can
introduce the generalization of continued fractions to i.e.m.'s
(see \cite{Y1, Y2} for a more detailed discussion) due to the
work of Rauzy \cite{Ra}, Veech \cite{Ve} and Zorich \cite{Z1,Z2}.



We say that $T$ is of {\it top type} (respectively {\it bottom type})
if one has $\lambda_{\alpha_t}\ge\lambda_{\alpha_b}$ (respectively $\lambda_{\alpha_b}\ge\lambda_{\alpha_t}$);
we then define a new i.e.m.\ $\mathcal{V} (T)$ as the induced map on $\left[0, \lambda^* - \lambda_{\alpha_b}\right)$
(respectively $\left[0, \lambda^* - \lambda_{\alpha_t}\right)$),
which is given by a new admissible pair $\mathcal{R}_t(\pi_t, \pi_b)$ and the lengths
$(\hat{\lambda}_\alpha)_{\alpha\in\mathcal{A}}$ given by
$$\begin{cases}
\hat{\lambda}_\alpha = \lambda_\alpha & \textrm{if } \alpha\not=\alpha_t,\\
\hat{\lambda}_{\alpha_t} = \lambda_{\alpha_t}- \lambda_{\alpha_b} & \textrm{otherwise}
\end{cases}$$
for the top type $T$; a new admissible pair $\mathcal{R}_b(\pi_t, \pi_b)$ and the lengths $(\hat{\lambda}_\alpha)_{\alpha\in\mathcal{A}}$ given by
$$\begin{cases}
\hat{\lambda}_\alpha = \lambda_\alpha & \textrm{if } \alpha\not=\alpha_b,\\
\hat{\lambda}_{\alpha_b} = \lambda_{\alpha_b}- \lambda_{\alpha_t} & \textrm{otherwise}
\end{cases}$$
for the bottom type $T$.

The {\it Rauzy diagram} is the graph of vertices obtained by saturation of $(\pi_t,\pi_b)$ under the action of $\mathcal R_t$ and $\mathcal R_b$ and two arrows joining $(\pi_t,\pi_b)$ to $\mathcal{R}_t (\pi_t, \pi_b)$, $\mathcal{R}_b (\pi_t, \pi_b)$.
For an arrow joining $(\pi_t,\pi_b)$ to $\mathcal{R}_t (\pi_t, \pi_b)$ (respectively $\mathcal{R}_b (\pi_t, \pi_b)$) the element $\alpha_t \in \A$ (respectively $\alpha_b \in \A$) is called the {\it winner} and the element $\alpha_b \in \A$ (respectively $\alpha_t \in \A$) is called the {\it loser}.

Iterating this process, we obtain a sequence of i.e.m.\ $T(n) = \mathcal{V}^n (T)$ and an infinite path in the Rauzy diagram starting from $(\pi_t,\pi_b)$. 
In fact, a further property of irrational interval exchange maps (i.e.\ with the Keane property) is that every letter is taken infinitely many times as the winner
in the infinite path (in the Rauzy diagram) associated to $T$.
This property is fundamental in order to be able to group together several iterations of $\mathcal{V}$ to obtain the accelerated Zorich continued fraction algorithm.

For an arrow $\gamma$ with winner $\alpha$ and loser $\beta$ in the Rauzy diagram let $B_\gamma = \mathbb I + E_{\beta\alpha}$ where $\mathbb I$ is the identity matrix and $E_{\beta\alpha}$ is the matrix with only non-zero entry 1 at $(\beta,\alpha)$.
For a finite path $\underline{\gamma} = \gamma_1 \gamma_2 \cdots \gamma_n$ in the Rauzy diagram we associate a $SL(\mathbb Z^{\A})$ matrix with non-negative entries 
$$ B_{\underline{\gamma}} = B_{\gamma_n} \cdots B_{\gamma_1}.$$
Let $\gamma^T(m,n) = \gamma(m,n)$, $m \le n$ be the path in the Rauzy diagram from the permutation of $T(m)$ to the permutation of $T(n)$
and denote 
$$B(m,n) = B_{\gamma (m,n)}, \qquad B(n) = B(0,n).$$
Let $\lambda(n)$ be the length data of $T(n)$. Then we have 
\begin{equation}\label{eqn1}
\lambda(m) = \lambda(n) B(m,n) .
\end{equation}

Zorich's accelerated continued fraction algorithm is obtained by considering 
by $(\mathcal{V}^{n_k})_{k\ge 0}$
 where $n_k$ is the following sequence: $n_0 = 0$ and $n_{k+1} > n_k$ is chosen so as to assure that $\gamma(n_k,n_{k+1})$ is the longest path whose arrows have the same winner.
 
A further acceleration algorithm by Marmi-Moussa-Yoccoz \cite{MMY} is obtained by $(\mathcal{V}^{m_k})_{k\ge 0}$
 where $m_k$ is defined as follows: $m_0 = 0$ and $m_{k+1} > m_k$ is the largest integer such that all letters in $\mathcal{A}$ are taken as winner by arrows in $\gamma(m_k,m_{k+1})$.



As the irrational rotations,
bounded type interval exchange map $T$ can be characterized by its continued fraction matrix.

\begin{itemize}

\item[(A)] The MMY cocycle matrices is bounded, i.e., 
$$ \| B(m_k, m_{k+1}) \| \le M. $$ 

\item[(Z)] The Zorich cocycle matrices are bounded, i.e.,  
$$ \| B(n_k, n_{k+1}) \| \le M. $$ 
\end{itemize}

Let $\Delta (T)$ be the minimal distance between discontinuities of $T$.
We have the following characterization of the bounded type i.e.m. by dynamics of $T$.

\begin{itemize}
\item[(D)]
There is a constant $c$ such that 
$$\Delta (T^n) \ge \frac{c}{n} \text{ for all } n.$$

\item[(U)]
There is a constant $c >0$ such that for all $x$
$$ \liminf_{n \to \infty} n \cdot | T^n (x) - x| \ge c. $$ 


\end{itemize}
Here and after, the matrix norm is $\| A \|= \sum_{i,j}|a_{ij}|$ for a matrix $A = (a_{i,j})$.

\begin{THMX}
(i) (A) bounded type and (D) bounded type are equivalent.

(ii) (D) bounded type implies (U) bounded type.

(iii) (U) bounded type implies (Z) bounded type.
\end{THMX}

The inverse of (ii) and (iii) do not hold. See Section~\ref{exm} for the examples.
The proof of (i) is given in Section 2 and In Section 3, the proofs of (ii) and (iii) are presented.

In the last ten years, there has been progress in diophanitne condition of the i.e.m. (see \cite{BC, Cha, Mar})
and the Roth type diophantine condition for the i.e.m. has been studied in \cite{Dio, KimMarmi, MMY2}. 
Condition (D) is considered by Boshernitzan for unique ergodicity\cite{Bo}.
Condition (D) and the bounded minimum saddle connection are equivalent, see \cite{HMU, Vo}.


While we were writing this paper, we found out that the equivalence (i) in the Main Theorem is also proved in the recent preprint \cite{HMU} by Hubert, Marchese and Ulcigrai. 
In \cite{HMU} the authors consider a related acceleration of the algorithm (the positive acceleration), whose matrices are uniformly bounded if and only if (A) holds and show that bounded positive matrices is equivalent to (D) (see \cite{HMU}, Corollary 4.8 combined with Proposition 1.1).  
From their proof one can also obtain quantitative relations between the constant $c$ in (D) and the norm of the matrices (see Theorem 4.7 in \cite{HMU}), but our proof is much shorter and less combinatorially involved than theirs (compare with Appendix C in \cite{HMU}).

\section{The bounded MMY cocycle condition and the bounded gap condition of discontinuities}

In this section, we prove (i) of the main theorem through Proposition~\ref{prop1} and \ref{prop2}.
Let $r = \max(2d - 3, 2)$. By \cite{MMY}, each entry of the matrix $B(m_k, m_{k+r})$ is strictly positive.

Assume that 
$\| B(m_k, m_{k+1}) \| \le M $ for all $k$.
Then clearly we have 
\begin{equation}\label{basic}
\| B(m_k, m_{k+r}) \| \le M^r, \qquad \| B (m_{k+r}) \| \le M^r \| B (m_k) \|.
\end{equation}

Since 
$$ \sum_{\alpha \in \A} \lambda_\alpha (m_{k+r}) B_{\alpha \beta } (m_k, m_{k+r}) = \lambda_\beta (m_k)$$
and $B_{\alpha \beta} (m_k, m_{k+r}) \ge 1$ for all $\alpha, \beta \in \mathcal A$, 
we have
\begin{equation}\label{minmax}
\min_{\alpha \in \A} \lambda_\alpha (m_k) > \max_{\alpha \in \A} \lambda_\alpha (m_{k+r}).
\end{equation}

Also we have 
$$ \sum_{\alpha, \beta \in \A} \lambda_\alpha (m_k) B_{\alpha \beta } (m_k) = 1,$$
which follows 
\begin{equation}\label{maxq}
 \| B(m_k) \| \cdot \max_{\alpha \in \A} \lambda_\alpha (m_k) \ge 1.
\end{equation}

Let $B_\alpha (m_k) = \sum_\beta B_{\alpha \beta} (m_k)$.
Then for any $\alpha \in \A$ we have
\begin{equation}\label{minq}
\| B(m_k) \| = \sum_{\beta, \delta} B_{\beta \delta} (m_k) \le \sum_{\beta, \delta} B_{\alpha \delta} (m_k, m_{k+r}) B_{\delta \beta} (m_k) = B_\alpha (m_{k+r}).
\end{equation}


\begin{lemma}[\cite{Dio}, Lemma 4.2 and 4.3]\label{lem_Dio}
If $0 < n \le \min_\alpha B_\alpha (m_k)$, then 
$$ \min_{\alpha \in \A} \lambda_\alpha (m_{k+r}) \le \Delta (T(m_k)^2) \le \Delta(T^n).$$
\end{lemma}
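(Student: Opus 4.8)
The statement is the conjunction of two inequalities, and I would prove each separately by working with the Rokhlin towers of the induction. Put $\lambda^*(m):=\sum_\alpha\lambda_\alpha(m)$, $I^{(m)}:=[0,\lambda^*(m))$ and $I_\alpha^{(m)}:=[p_\alpha(m),p_\alpha(m)+\lambda_\alpha(m))$, so that $T(m)$ is the first return map of $T$ to $I^{(m)}$, on $I_\alpha^{(m)}$ the return time of $T$ to $I^{(m)}$ is the constant $B_\alpha(m)=\sum_\beta B_{\alpha\beta}(m)$, and the floors $T^j(I_\alpha^{(m)})$ with $0\le j<B_\alpha(m)$, $\alpha\in\A$, form a partition $\mathcal D_m$ of $[0,1)$ into intervals, each of length some $\lambda_\alpha(m)$ and each contained in a single continuity interval of $T$. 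For $m'\ge m$ we will use that $\mathcal D_{m'}$ refines $\mathcal D_m$, that the atoms of $\mathcal D_{m'}$ lying in $I^{(m)}$ are exactly the floors of the Rokhlin tower of $T(m)$ above $I^{(m')}$, and that each such atom lies in a single $I_\beta^{(m)}$. Write $\mathrm{disc}(S)$ for the break-point set of an i.e.m.\ $S$ (endpoints adjoined), so $\mathrm{disc}(T(m))=\{p_\alpha(m)\}_\alpha$ and $\mathrm{disc}(T(m)^2)=\{p_\alpha(m)\}_\alpha\cup T(m)^{-1}\{p_\alpha(m)\}_\alpha$ up to endpoints.

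For the inequality $\min_\alpha\lambda_\alpha(m_{k+r})\le\Delta(T(m_k)^2)$ I would first note that, since every entry of $B(m_k,m_{k+r})$ is at least $1$, the identity $\lambda_\beta(m_k)=\sum_\alpha\lambda_\alpha(m_{k+r})B_{\alpha\beta}(m_k,m_{k+r})$ gives $\lambda^*(m_{k+r})\le\min_\beta\lambda_\beta(m_k)$, hence $I^{(m_{k+r})}\subseteq I_{\pi_t^{-1}(1)}^{(m_k)}$. The key step is the claim that the restriction of $\mathcal D_{m_{k+r}}$ to $I^{(m_k)}$ refines the partition of $I^{(m_k)}$ into continuity intervals of $T(m_k)^2$: granting it, every continuity interval of $T(m_k)^2$ is a union of atoms of $\mathcal D_{m_{k+r}}$, hence has length $\ge\min_\alpha\lambda_\alpha(m_{k+r})$, which is the inequality. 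To prove the claim, take an atom $A$ of $\mathcal D_{m_{k+r}}$ in $I^{(m_k)}$, i.e.\ a floor of the Rokhlin tower of $T(m_k)$ above $I^{(m_{k+r})}$; then $A\subseteq I_\beta^{(m_k)}$ for one $\beta$, and $T(m_k)(A)$ is either the next floor of that tower (again an atom, so again inside one $I_{\beta'}^{(m_k)}$) or the image of a top floor, which lands in $I^{(m_{k+r})}\subseteq I_{\pi_t^{-1}(1)}^{(m_k)}$; in both cases $T(m_k)$, and therefore $T(m_k)^2$, restricts to a single translation on $A$.

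For $\Delta(T(m_k)^2)\le\Delta(T^n)$ with $0<n\le N:=\min_\alpha B_\alpha(m_k)$, the starting observation is that since $n$ does not exceed the minimal return time to $I^{(m_k)}$, for any $z$ interior to some $I_\gamma^{(m_k)}$ none of $z,Tz,\dots,T^{n-1}z$ is a break point of $T$; hence $\mathrm{disc}(T^{n'})\cap I^{(m_k)}\subseteq\{p_\gamma(m_k)\}_\gamma$ for every $n'\le N$. Lifting this through the tower: if $x\in\mathrm{disc}(T^n)$ lies in a floor $T^i(I_\alpha^{(m_k)})$, write $x=T^i(x')$ with $x'\in I_\alpha^{(m_k)}$; the first $B_\alpha(m_k)-i$ iterates of $x$ meet no break point of $T$, so any break point of $T$ along $x,\dots,T^{n-1}x$ occurs at time $\ge B_\alpha(m_k)-i$, whence $T(m_k)(x')=T^{B_\alpha(m_k)-i}(x)\in\mathrm{disc}(T^{n'})\cap I^{(m_k)}\subseteq\{p_\gamma(m_k)\}_\gamma$ with $n':=n-(B_\alpha(m_k)-i)\le N$, and therefore $x'\in\mathrm{disc}(T(m_k)^2)\cap\overline{I_\alpha^{(m_k)}}$. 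Thus $\mathrm{disc}(T^n)$, together with the floor boundaries of $\mathcal D_{m_k}$ (which are translates of points of $\{p_\alpha(m_k)\}$), is contained in the lift $L:=\bigcup_{\alpha,i}T^i\big(\mathrm{disc}(T(m_k)^2)\cap\overline{I_\alpha^{(m_k)}}\big)$. Since $\{p_\alpha(m_k)\}\subseteq\mathrm{disc}(T(m_k)^2)$, inside each floor the gaps of $L$ equal gaps between consecutive break points of $T(m_k)^2$ that lie in a single $\overline{I_\alpha^{(m_k)}}$, hence are $\ge\Delta(T(m_k)^2)$; and the floor endpoints lie in $L$, so the gaps across a floor boundary are as well. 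As $\mathrm{disc}(T^n)\subseteq L$, every gap of $\mathrm{disc}(T^n)$ is $\ge\Delta(T(m_k)^2)$, i.e.\ $\Delta(T^n)\ge\Delta(T(m_k)^2)$.

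The step I expect to be the main obstacle is the precise accounting of the orbit after its first return to $I^{(m_k)}$: one must verify that exactly one further application of $T(m_k)$ can enter --- guaranteed in the first inequality by the ``return atoms'' falling inside $I^{(m_{k+r})}\subseteq I_{\pi_t^{-1}(1)}^{(m_k)}$, and in the second by $n'\le N$ --- since this is exactly what makes $\Delta(T(m_k)^2)$, rather than some higher iterate, the quantity interpolating between $\min_\alpha\lambda_\alpha(m_{k+r})$ and $\Delta(T^n)$. The remaining care concerns the treatment of the endpoints ($0$, $\lambda^*(m_k)$, $1$, and the tops of tower columns), a routine matter once $\mathrm{disc}(\cdot)$ and $\Delta(\cdot)$ are fixed as above.
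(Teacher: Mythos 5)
Your statement is cited in the paper from \cite{Dio} (Lemmas~4.2 and~4.3) and is \emph{not} proved there, so there is no in-paper argument to compare against; I can only assess your proposal on its own merits.

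Your proof is correct, and the route you take --- Rokhlin towers over $I^{(m_k)}$ and $I^{(m_{k+r})}$ --- is the natural one. The two halves are cleanly separated. For the lower bound, the crucial observations are that positivity of $B(m_k,m_{k+r})$ gives $\lambda^*(m_{k+r})\le\min_\beta\lambda_\beta(m_k)$, hence $I^{(m_{k+r})}\subseteq I^{(m_k)}_{(\pi_t^{(m_k)})^{-1}(1)}$, and that atoms of $\mathcal D_{m_{k+r}}$ restricted to $I^{(m_k)}$ are floors over $I^{(m_{k+r})}$; your case split on whether the atom is a top floor then shows $T(m_k)^2$ is a translation on each such atom, so every continuity interval of $T(m_k)^2$ is a union of atoms and its length is at least $\min_\alpha\lambda_\alpha(m_{k+r})$. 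For the upper bound, the hypothesis $n\le\min_\alpha B_\alpha(m_k)$ forces the orbit segment $x,\dots,T^{n-1}x$ to re-enter $I^{(m_k)}$ at most once, and after the single re-entry the time budget that remains is again $\le N$, which is precisely why $T(m_k)^2$ (and not a higher power) controls the picture; together with the fact that floor endpoints are translates of $\{p_\alpha(m_k)\}\subseteq\mathrm{disc}(T(m_k)^2)$ this gives $\mathrm{disc}(T^n)\subseteq L$ and $\Delta(T^n)\ge\Delta(T(m_k)^2)$ by the inclusion-of-break-point-sets argument. Two small expository points: (a) the phrase ``the gaps across a floor boundary are as well'' is misleading --- the point is rather that floor boundaries lie in $L$, so there are no gaps straddling a boundary and every gap of $L$ is a within-floor gap of $\mathrm{disc}(T(m_k)^2)$; (b) when you write ``the first $B_\alpha(m_k)-i$ iterates of $x$ meet no break point of $T$'' you implicitly assume $x'$ interior to $I_\alpha^{(m_k)}$; the boundary case $x'\in\{p_\alpha(m_k),\,p_\alpha(m_k)+\lambda_\alpha(m_k)\}$ should be dispatched separately by noting such $x$ are exactly the floor endpoints, which you already place in $L$. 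Neither affects correctness.
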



For any $\min_\alpha B_\alpha (m_{k-1}) < n \le \min_\alpha B_\alpha (m_k)$ 
we have
\begin{align*}
\Delta(T^n) &\ge \min_\alpha \lambda_\alpha (m_{k+r}), &\text{ by Lemma~\ref{lem_Dio}},\\
&\ge \max_\alpha \lambda_\alpha (m_{k+2r}), &\text{ by (\ref{minmax})},\\
&\ge \frac{1}{\|B(m_{k+2r})\|}, &\text{ by (\ref{maxq})}, \\
&\ge \frac 1{M^{4r}} \cdot \frac{1}{\|B(m_{k-2r})\|}, &\text{ by (\ref{basic})}, \\
&> \frac 1{M^{4r}} \cdot \frac{1}{\min_\alpha B_\alpha( m_{k-r} )}, &\text{ by (\ref{minq})}, \\
&> \frac 1{M^{4r}} \cdot \frac{1}{n} &\text{ by the assumption}.
\end{align*}

Therefore, we have the following:
\begin{proposition}\label{prop1}
If the MMY cocycle of $T$ satisfies $\|B(m_k,m_{k+1})\| \le M$, then we have
$$ \Delta(T^n) > \frac{1}{M^{4r}} \cdot \frac 1n .$$
\end{proposition}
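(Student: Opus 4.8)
The plan is essentially to read off the proof of Proposition~\ref{prop1} from the chain of inequalities that has already been assembled in the text, packaging it cleanly and handling the ``boundary'' range of small $n$ that the displayed estimate does not yet cover. The key observation is that the displayed computation produces, for every $k$ and every $n$ in the window $\min_\alpha B_\alpha(m_{k-1}) < n \le \min_\alpha B_\alpha(m_k)$, the bound $\Delta(T^n) > M^{-4r} n^{-1}$; since the windows $(\min_\alpha B_\alpha(m_{k-1}), \min_\alpha B_\alpha(m_k)]$ are nested intervals whose union is all sufficiently large integers (because every letter is taken infinitely often as winner, so $\min_\alpha B_\alpha(m_k) \to \infty$), this already proves the estimate for all large $n$. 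Thus the first step is simply to assert that the proposition follows from the inequality chain preceding it, for all $n$ exceeding some threshold $n_0$ depending only on the combinatorics and the first few induction steps.

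The second step is to dispose of the finitely many remaining values $1 \le n \le n_0$. For these one argues directly: $\Delta(T^n)$ is positive for every fixed $n$ (the Keane property guarantees $T$ has no connections, so the orbits of the discontinuities stay distinct and $\Delta(T^n) > 0$), so $\min_{1 \le n \le n_0} n\,\Delta(T^n)$ is a positive constant; enlarging $M^{4r}$ if necessary — or more cleanly, observing that the constant in the proposition can be replaced by $\min\bigl(M^{-4r},\ \min_{1\le n\le n_0} n\Delta(T^n)\bigr)$, which is still a positive constant determined by $T$ and $M$ — yields the stated bound uniformly in $n$. Since the proposition as phrased only asserts existence of the bound with the specific constant $M^{-4r}$, one should note that for $k$ small the same chain still runs provided the matrices $B(m_{k-2r})$ etc.\ are interpreted correctly (for negative indices take $B = \mathbb I$), so in fact the chain covers $n \ge 1$ directly once one checks the inequalities \eqref{basic}--\eqref{minq} degrade gracefully at the left end; I would include a one-line remark to this effect rather than a separate small-$n$ argument.

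The only genuine content to verify is therefore the individual links in the chain, all of which are already justified in the excerpt: Lemma~\ref{lem_Dio} gives the first inequality once we know $n \le \min_\alpha B_\alpha(m_k)$ (which is exactly the right endpoint of our window), \eqref{minmax} applied with $k$ replaced by $k+r$ gives the second, \eqref{maxq} applied at $m_{k+2r}$ gives the third, \eqref{basic} iterated $4r$ times relates $\|B(m_{k+2r})\|$ to $\|B(m_{k-2r})\|$, \eqref{minq} relates $\|B(m_{k-2r})\|$ to $\min_\alpha B_\alpha(m_{k-r})$, and finally the left endpoint of the window gives $\min_\alpha B_\alpha(m_{k-r}) \le \min_\alpha B_\alpha(m_{k-1}) < n$ (using monotonicity of $k \mapsto \min_\alpha B_\alpha(m_k)$, which holds because $B(m_{k-r},m_{k-1})$ has non-negative integer entries with positive diagonal). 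I expect the main obstacle to be purely bookkeeping: making sure the index shifts line up (one must invoke \eqref{minmax} and \eqref{maxq} at the shifted level $k+r$, not $k$) and that the monotonicity claim $\min_\alpha B_\alpha(m_{k-r}) \le \min_\alpha B_\alpha(m_{k-1})$ is stated, since it is what lets the windows tile the integers and also supplies the final strict inequality. No step requires a new idea beyond what \eqref{basic}--\eqref{minq} and Lemma~\ref{lem_Dio} already provide.
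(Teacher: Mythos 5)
Your proposal is correct and takes essentially the same route as the paper: the proof really is just the chain of inequalities displayed before the proposition, applied to the window $\min_\alpha B_\alpha(m_{k-1}) < n \le \min_\alpha B_\alpha(m_k)$, and you have identified every link (Lemma~\ref{lem_Dio}, then \eqref{minmax}, \eqref{maxq}, \eqref{basic}, \eqref{minq}) in the same order the paper uses them. One small caution on the boundary remark: with the convention $B(m_j)=\mathbb I$ for $j\le 0$, inequality \eqref{minq} itself does \emph{not} survive (it would assert $d = \|\mathbb I\| \le B_\alpha(m_{k-r}) = 1$ when $k<r$), so for the finitely many small $k$ one should instead bound $\|B(m_{k+2r})\|\le M^{k+2r}$ directly from submultiplicativity and use $n\ge 2$; the desired inequality still follows, but not by the literal ``graceful degradation'' of \eqref{basic}--\eqref{minq}.
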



\begin{lemma}\label{nnn}
We have either
$$\min_\alpha \lambda_\alpha (m_k) \cdot \sqrt{\| B(m_k,m_{k+1}) \|} <  \lambda^* (m_k) $$
or 
$$\min_\alpha \lambda_\alpha (m_{k+1}) \cdot \sqrt{\| B(m_k,m_{k+1}) \|} <  \lambda^* (m_{k+1}) $$
\end{lemma}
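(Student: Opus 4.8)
The plan is to analyze a single MMY step $\gamma(m_k,m_{k+1})$ and exploit the fact that the matrix $B(m_k,m_{k+1})$ is built from elementary matrices $B_\gamma = \mathbb I + E_{\beta\alpha}$, so its entries grow only by "transporting mass" along the winners of the intermediate Rauzy arrows. Write $B = B(m_k,m_{k+1})$, $\lambda = \lambda(m_k)$, $\hat\lambda = \lambda(m_{k+1})$, so that $\lambda = \hat\lambda B$ by \eqref{eqn1}. The key elementary bound is that each entry satisfies $B_{\alpha\beta}\le \|B\|$, and more usefully that the $\beta$-th column sum $\sum_\alpha B_{\alpha\beta}$ times $\hat\lambda_{\min}$ is at most $\lambda_\beta$, while $\|B\| = \sum_{\alpha,\beta} B_{\alpha\beta}$. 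The idea is that since $\lambda^*(m_k) = \sum_\beta \lambda_\beta = \sum_{\alpha,\beta}\hat\lambda_\alpha B_{\alpha\beta}$, if \emph{all} the $\hat\lambda_\alpha$ were comparable to $\lambda^*(m_{k+1})$ and all the $\lambda_\beta$ comparable to $\lambda^*(m_k)$, then $\|B\|$ could not be too large — roughly $\|B\| \lesssim \lambda^*(m_k)/\hat\lambda_{\min}$ and also $\lambda^*(m_{k+1}) \lesssim \lambda_{\min}(m_k)\cdot d$ via the positivity/column-sum relations of a single step.

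First I would record the two one-sided inequalities coming from $\lambda = \hat\lambda B$: for every $\beta$,
\begin{equation*}
\lambda_\beta(m_k) = \sum_\alpha \hat\lambda_\alpha(m_{k+1}) B_{\alpha\beta} \ge \min_\alpha \hat\lambda_\alpha(m_{k+1}) \cdot \sum_\alpha B_{\alpha\beta},
\end{equation*}
and summing over $\beta$,
\begin{equation*}
\lambda^*(m_k) \ge \min_\alpha \hat\lambda_\alpha(m_{k+1}) \cdot \|B(m_k,m_{k+1})\|.
\end{equation*}
This already gives one of the two alternatives whenever $\min_\alpha\lambda_\alpha(m_k)\sqrt{\|B\|}$ fails to be small — so the remaining case to handle is when $\min_\alpha\lambda_\alpha(m_k)$ is comparatively large, i.e. $\min_\alpha\lambda_\alpha(m_k)\ge \lambda^*(m_k)/\sqrt{\|B\|}$. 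In that regime I would bound $\lambda^*(m_{k+1}) = \sum_\alpha\hat\lambda_\alpha(m_{k+1})$ from below. Since $B = B(m_k,m_{k+1})$ has all diagonal entries equal to $1$ (it is a product of $\mathbb I + E_{\beta\alpha}$), for each fixed $\alpha$ we get $\lambda_\alpha(m_k) = \sum_\delta \hat\lambda_\delta B_{\delta\alpha} \ge \hat\lambda_\alpha$, hence $\hat\lambda_\alpha(m_{k+1})\le\lambda_\alpha(m_k)$; but I need a \emph{lower} bound on some $\hat\lambda_\alpha$. For this I would use that over one MMY block every letter wins at least once, so the winner $\alpha_0$ of the last arrow in $\gamma(m_k,m_{k+1})$ has $\hat\lambda_{\alpha_0}(m_{k+1})$ equal to $\lambda_{\alpha_0}(m_k)$ minus some subset of other lengths; combined with $\lambda_{\alpha_0}(m_k)\ge\min_\alpha\lambda_\alpha(m_k)$ this transfers the size of $\min_\alpha\lambda_\alpha(m_k)$ forward — or, more robustly, I would argue that since all entries of $B$ are controlled by $\|B\|$, we have $\lambda^*(m_{k+1}) = \sum_\alpha\hat\lambda_\alpha \ge \frac{1}{\|B\|}\sum_{\alpha,\beta}\hat\lambda_\alpha B_{\alpha\beta} = \frac{\lambda^*(m_k)}{\|B\|}$, and then chain this with $\min_\alpha\lambda_\alpha(m_k)\ge\lambda^*(m_k)/\sqrt{\|B\|}$ and a trivial bound $\lambda^*(m_{k+1})\le d\,\max_\alpha\hat\lambda_\alpha(m_{k+1})\le d\,\max_\alpha\lambda_\alpha(m_k)$.

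The main obstacle is getting the powers of $\|B\|$ to balance so that $\sqrt{\|B\|}$ (not $\|B\|$ itself) appears in the surviving alternative: the naive estimates lose a full factor of $\|B\|$ on each side, and the point of the lemma is precisely that the loss is shared, half going to each of $\min_\alpha\lambda_\alpha(m_k)$ and $\min_\alpha\lambda_\alpha(m_{k+1})$. Concretely, one shows $\min_\alpha\lambda_\alpha(m_k)\cdot\min_\alpha\hat\lambda_\alpha(m_{k+1})\le \lambda^*(m_k)\lambda^*(m_{k+1})/\|B(m_k,m_{k+1})\|$ (using the column-sum relation above on one factor and a matching row-type relation or the $\lambda^*$ comparison on the other), whence at least one of the two factors on the left is at most $\sqrt{\lambda^*(m_k)\lambda^*(m_{k+1})/\|B\|}\le \lambda^*(m_j)/\sqrt{\|B\|}$ for the appropriate $j\in\{k,k+1\}$ after using $\lambda^*(m_{k+1})\le\lambda^*(m_k)$. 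I expect the bookkeeping to identify the correct "row-type" inequality — the analogue of $\lambda^*(m_k)\ge\min_\alpha\hat\lambda_\alpha\cdot\|B\|$ but producing $\min_\alpha\lambda_\alpha(m_k)$ — to be the only genuinely delicate point, and I would derive it from the fact that $B$ has a strictly positive structure on the relevant sub-block after a full MMY step, or failing that, prove the lemma by contradiction: assuming both displayed inequalities fail simultaneously forces $\|B\| < \|B\|$ via the product relation $\lambda^*(m_k)\lambda^*(m_{k+1}) \ge \min\lambda_\alpha(m_k)\cdot\min\lambda_\alpha(m_{k+1})\cdot\|B\|$, a clean contradiction.
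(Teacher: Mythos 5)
Your derivation of the first inequality, $\lambda^*(m_k)\ge\min_\alpha\lambda_\alpha(m_{k+1})\cdot\|B(m_k,m_{k+1})\|$, is correct and is exactly half of the paper's argument. But the proposal never closes the other half, and the reason is a concrete misstatement of the combinatorics of the MMY block: you write that ``over one MMY block every letter wins at least once'' and try to transport a lower bound through the winner of the last arrow. In fact the defining feature of the MMY step used here is the opposite: there is a letter $\alpha=\alpha(k)$ that is \emph{not} taken as winner by any arrow in $\gamma(m_k,m_{k+1})$, and hence satisfies $\lambda_\alpha(m_k)=\lambda_\alpha(m_{k+1})$. This single preserved length is what bridges the two time scales. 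Because you don't have it, all your substitutes (e.g.\ $\lambda^*(m_{k+1})\ge\lambda^*(m_k)/\|B\|$, or $\lambda^*(m_{k+1})\le d\,\max_\alpha\lambda_\alpha(m_k)$) lose a full factor of $\|B\|$ or go the wrong direction, and the product inequality $\lambda^*(m_k)\lambda^*(m_{k+1})\ge\min_\alpha\lambda_\alpha(m_k)\cdot\min_\alpha\lambda_\alpha(m_{k+1})\cdot\|B\|$ is never actually established.

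Once the preserved letter is available the paper's proof is very short: split on whether $\lambda_\alpha(m_k)\sqrt{\|B\|}<\lambda^*(m_k)$. If yes, the first alternative holds since $\min\lambda(m_k)\le\lambda_\alpha(m_k)$. If no, your column-sum estimate gives some $\beta$ with $\lambda_\beta(m_{k+1})\|B\|<\lambda^*(m_k)\le\lambda_\alpha(m_k)\sqrt{\|B\|}$, hence $\lambda_\beta(m_{k+1})\sqrt{\|B\|}<\lambda_\alpha(m_k)=\lambda_\alpha(m_{k+1})<\lambda^*(m_{k+1})$, giving the second alternative. If you want to keep your contradiction framing, the missing ingredient in the product relation is precisely the inequality $\min_\alpha\lambda_\alpha(m_k)\le\lambda_\alpha(m_k)=\lambda_\alpha(m_{k+1})<\lambda^*(m_{k+1})$ coming from the non-winning letter; without it there is no route to the clean $\sqrt{\|B\|}$ split.
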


\begin{proof}
For each $k$ let $\alpha = \alpha(k) \in \mathcal A$, depending on $k$, be the letter which is not taken as the winner of the arrows in the path $\gamma(m_k,m_{k+1})$.
Then
$$ \lambda_\alpha (m_k) = \lambda_\alpha (m_{k+1}) .$$

Now we have two cases:

Case (i) :
$\lambda_\alpha (m_k) \cdot \sqrt{\| B(m_k,m_{k+1}) \|} <  \lambda^* (m_k) $ , which implies the lemma. \\
 
Case (ii) :
$\lambda_\alpha (m_k) \cdot \sqrt{\| B(m_k,m_{k+1}) \|} \ge  \lambda^* (m_k) $ \\
Since
$$\sum_{\alpha , \beta \in \mathcal A} \lambda_\alpha (m_{k+1}) B_{\alpha \beta} (m_k,m_{k+1}) = \lambda^* (m_k),$$
we have
$$ \min_\alpha \lambda_\alpha (m_{k+1}) \cdot \| B(m_k,m_{k+1}) \| <  \lambda^* (m_k) < \max_\alpha \lambda_\alpha (m_{k+1}) \cdot \| B(m_k,m_{k+1}) \| . $$
Thus, there exists $\beta \in \mathcal A$ such that 
$$ \lambda_\beta (m_{k+1})  \cdot \| B(m_k,m_{k+1}) \| <  \lambda^* (m_k) \le
\lambda_\alpha (m_k) \cdot \sqrt{\| B( m_k, m_{k+1}) \|} .$$
Therefore, we have
\begin{equation*} 
\lambda^* (m_{k+1}) > \lambda_\alpha (m_{k+1}) = \lambda_\alpha (m_k) > \lambda_\beta (m_{k+1}) \cdot \sqrt{\| B(m_k, m_{k+1}) \|}. \qedhere
\end{equation*}
\end{proof}

\begin{lemma}\label{new}
Let $\alpha \in \mathcal A$ be the winner of $\gamma(n-1,n)$ and the loser of $\gamma(n,n+1)$.
For large $n$, if $\lambda_\alpha (n) <  \lambda^* (n)/M^d$, $M > d$, 
then there is an integer $s$, $1 \le s < d$, such that
$$ \Delta \left(T^{\lfloor 2 M^s/ \lambda^* (n) \rfloor} \right) < (d-1) \frac{\lambda^*(n)}{M^{s+1}}. $$
\end{lemma}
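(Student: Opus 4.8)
The plan is to split the argument into a purely combinatorial step, in which the exponent $s$ is produced by a pigeonhole on the lengths $\lambda_\beta(n)$, and a dynamical step, in which the resulting ``spectral gap'' is turned into a short continuity interval of $T^{\lfloor 2M^s/\lambda^*(n)\rfloor}$; the second step is where the hypotheses that $n$ is large and that $\alpha$ is the winner of $\gamma(n-1,n)$ and the loser of $\gamma(n,n+1)$ are actually used. For the combinatorial step I would normalize $\lambda^*(0)=1$ and, for $1\le j\le d$, set $f(j)=\#\{\beta\in\A:\lambda_\beta(n)\ge\lambda^*(n)/M^j\}$. This is nondecreasing; since $\max_\beta\lambda_\beta(n)\ge\lambda^*(n)/d>\lambda^*(n)/M$ one has $f(1)\ge 1$, and since $\lambda_\alpha(n)<\lambda^*(n)/M^d$ one has $f(d)\le d-1$, so $f$ cannot be strictly increasing on $\{1,\dots,d\}$ (that would force $f(d)\ge d$) and there is $s$ with $1\le s<d$ and $f(s)=f(s+1)$. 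Then, with $\mathcal B=\{\beta:\lambda_\beta(n)\ge\lambda^*(n)/M^s\}$ and $\mathcal S=\A\setminus\mathcal B$, we get $\mathcal B\neq\emptyset$, $\alpha\in\mathcal S$, $\lambda_\beta(n)<\lambda^*(n)/M^{s+1}$ for every $\beta\in\mathcal S$, and in particular $\sum_{\beta\in\mathcal S}\lambda_\beta(n)<(d-1)\lambda^*(n)/M^{s+1}$, which is exactly the length to be undercut.

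For the dynamical step I would use the $n$-th Rauzy--Veech tower: it presents $[0,1)$ as disjoint columns, the column over $I_\beta(n)$ having height $B_\beta(n)=\sum_\gamma B_{\beta\gamma}(n)$, with $T(n)$ coinciding with $T^{B_\beta(n)}$ on $I_\beta(n)$, and the columns tiling $[0,1)$, so that $B_\beta(n)\lambda_\beta(n)\le 1$. For a long letter $\beta\in\mathcal B$ this yields $B_\beta(n)\le M^s/\lambda^*(n)$, and since $n$ is large ($\lambda^*(n)$ small) we get $N:=\lfloor 2M^s/\lambda^*(n)\rfloor\ge M^s/\lambda^*(n)\ge B_\beta(n)$ for every $\beta\in\mathcal B$. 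Hence on a long base interval $I_\beta(n)$ one has $T^N=T^{N-B_\beta(n)}\circ T(n)$ with $T(n)$ a single translation there, so any continuity interval of $T^N$ lying inside $I_\beta(n)\cap T(n)^{-1}(I_\gamma(n))$ with $\gamma\in\mathcal S$ has length $<\lambda_\gamma(n)<\lambda^*(n)/M^{s+1}$. The remaining point is to check that $T^N$ is genuinely discontinuous at the two ends of a maximal block of short base intervals (those indexed by $\mathcal S$) sitting between consecutive long ones, so that some continuity interval of $T^N$ is trapped inside a union of short level-$n$ intervals, of total length $\sum_{\beta\in\mathcal S}\lambda_\beta(n)<(d-1)\lambda^*(n)/M^{s+1}$. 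This is where one invokes that $n$ is a Zorich break point, i.e.\ that the winner switches off $\alpha$ at $n$: near $p_\alpha(n)$ and the neighbouring top-endpoints $T(n)$ already jumps, and the slack in the value of $N$ built into the floor function is spent to ensure that the later iterate $T^{N-B_\beta(n)}$ does not accidentally cancel that jump, so those endpoints lie in the discontinuity set of $T^N$.

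The main obstacle is precisely this ``no accidental cancellation'' statement: certifying that the short block of level-$n$ intervals is cut out of a longer continuity interval of $T^N$ by honest discontinuities of $T^N$, rather than lying in its interior. The pigeonhole, the packing bound $B_\beta(n)\lambda_\beta(n)\le 1$, and the inequality $N\ge B_\beta(n)$ for long $\beta$ are all immediate; the delicate Rauzy--Veech bookkeeping needed for the discontinuity statement is where I expect the bulk of the proof to lie, and is presumably why the lemma is formulated at a Zorich break point and with this particular choice of $N$, since at a generic index $n$ one could not rule out such cancellations.
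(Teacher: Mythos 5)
Your pigeonhole step matches the paper's exactly: the $\mathcal A_i$ layering, the nonempty top layer from $\max\lambda_\beta(n)\ge\lambda^*(n)/d$, the nonempty bottom layer from $\alpha$, and the empty intermediate layer $\mathcal A_s$ splitting $\A$ into $\mathcal A_{\mathrm{big}}$ and $\mathcal A_{\mathrm{small}}$. So the combinatorial half of your plan is fine and is essentially the paper's argument.

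The gap is exactly where you say it is, and it is genuine: you do not actually produce two close discontinuity points of a power of $T$ of the right size, and the forward route you sketch (iterating $T(n)$ on a long base column and ``trapping'' a continuity interval inside a maximal block of $\mathcal A_{\mathrm{small}}$-intervals) does not have an obvious fix, because neither ``the short intervals occur as a consecutive block'' nor ``the block endpoints are genuine discontinuities of $T^N$'' follows from what you have set up. The paper's resolution is a \emph{backward} argument, not a forward one. It takes $m<n$ to be the smallest index such that no arrow of $\gamma(m+1,n)$ has a loser in $\mathcal A_{\mathrm{big}}$, and lets $\mu\in\mathcal A_{\mathrm{big}}$, $\nu$ be the loser and winner of $\gamma(m,m+1)$. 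A short winner-chain argument shows $\nu\in\mathcal A_{\mathrm{small}}$ (here the hypothesis that $\alpha$ wins $\gamma(n-1,n)$ is used: a big $\nu$ that never loses after $m$ would have to keep winning up to $n$, forcing $\nu=\alpha$, impossible). Because after $m+1$ no big letter wins or loses, the interval $I_\nu(m+1)=[p_\nu(m+1),p_\mu(m+1))$ survives as a union of level-$n$ intervals all indexed by $\mathcal A_{\mathrm{small}}$, so $\lambda_\nu(m+1)<(d-1)\lambda^*(n)/M^{s+1}$. The packing bound $B_\beta(m)\lambda_\beta(m)\le 1$ applied at stage $m$, together with $\lambda_\mu(m)\ge\lambda_\mu(n)\ge\lambda^*(n)/M^s$, gives $B_\mu(m+1)=B_\nu(m)+B_\mu(m)<2M^s/\lambda^*(n)$. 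Since $p_\nu(m+1)$ and $p_\mu(m+1)$ are discontinuities of $T^{B_\nu(m+1)}$ and $T^{B_\mu(m+1)}$ respectively, with $B_\nu(m+1)\le B_\mu(m+1)\le\lfloor 2M^s/\lambda^*(n)\rfloor$, and $\Delta(T^k)$ is nonincreasing in $k$, the conclusion follows. This backward localization of the pair $(\mu,\nu)$ is precisely the ``delicate bookkeeping'' you anticipated but did not supply, and without it the lemma is not proved.
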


\begin{proof}
Let for $0 \le i < d $
$$
\mathcal A_i = \left \{ \beta \in \mathcal A : \frac{\lambda^* (n)}{M^{i+1}} \le \lambda_\beta (n) < \frac{\lambda^* (n)}{M^{i}} \right \}
$$
and
$$
\mathcal A_d = \{ \beta \in \mathcal A : \lambda_\beta (n) < \frac{\lambda^* (n)}{M^d} \}.
$$
Then, by the assumption, $\alpha \in \mathcal A_d \ne \emptyset$.
Since there is an $\beta \in \mathcal A$ such that 
$\lambda_\beta (n) > \lambda^* (n)/{d} >  \lambda^* (n)/M$, neither $\mathcal A_0$ is an empty set.

Since there are $d$ elements in $\mathcal A$, there exist an $s$, $1 \le s < d$, such that $\mathcal A_s $ is empty.
Let 
$$
\mathcal A_{\textrm{big}} = \bigcup_{i=0}^{s-1} \mathcal A_i, \qquad
\mathcal A_{\textrm{small}} = \bigcup_{i=s+1}^{d} \mathcal A_i. 
$$
Both $\mathcal A_{\textrm{big}}$ and $\mathcal A_{\textrm{small}}$ are nonempty.

Take $m$ with $m < n$ be the smallest integer as 
no loser in $\gamma(m+1,n)$ belongs to $\mathcal A_{\textrm{big}}$. 
Put $\mu \in \mathcal A_{\textrm{big}}$ as the loser of the arrow $\gamma(m,m+1)$. 
Let $\nu$ be the winner of the arrow $\gamma(m,m+1)$.
Then $\nu \in \mathcal A_{\textrm{small}}$.
(if $\nu \in \mathcal A_{\textrm{big}}$, then $\nu \ne \alpha$ and $\nu$ should be a loser in $\gamma(m+1,n)$)

Hence we have
$\lambda_{\nu} (m+1) =  \lambda_{\nu} (m) - \lambda_{\mu} (m)$ and
\begin{align*}
B_{\nu} (m+1) &= B_{\nu} (m) < \frac{1}{\lambda_\nu (m)} < \frac{1}{\lambda_{\mu}(m)} \le \frac{M^s}{ \lambda^* (n)}, \\
B_{\mu} (m+1) &= B_{\nu} (m) + B_{\mu} (m)< \frac{1}{\lambda_\nu (m)} + \frac{1}{\lambda_{\mu}(m)} < \frac{2}{\lambda_{\mu}(m)} \le \frac{2 M^s}{ \lambda^* (n) }.
\end{align*}

There are two cases:

(i) $\pi_t^{(m)}(\mu ) = d$ and $\pi_b^{(m)}(\nu) = d$: \\
Then we have $\pi_t^{(m+1)}(\nu ) = \pi_t^{(m)}(\nu ) < d$ and 
$\pi_t^{(m+1)}(\mu ) = \pi_t^{(m)}(\nu ) +1$.

Since no letter in $\mathcal A_{\textrm{big}}$ is taken as the winner or the loser of the arrows of $\gamma(m+1,n)$, 
$$ I_\nu (m) = I_\nu (m+1) \cup \ I_\mu (m+1) \subset [0, \lambda^*(n))$$
and
$$ I_\nu (m+1) = \left[ p_\nu (m+1) , p_\mu (m+1) \right ).$$
Since $p_\nu (m+1)$, $p_\mu (m+1)$ are discontinuity points of $T(n)$ and $\pi_b^{(m+1)}(\nu) = d$, $m+1 \le n$,
we have $$I_\nu (m+1) = \bigsqcup_{\beta \in \mathcal A'} I_\beta (n)  
\ \text{ for some } \mathcal A' \subset A_{\textrm{small}}.$$
Therefore, we have 
\begin{equation*}
p_\mu (m+1)-p_\nu(m+1) =\lambda_\nu(m+1) 
< | \mathcal A_{\textrm{small}} | \cdot \frac{\lambda^*(n)}{M^{s+1}} 
\le (d-1)\frac{\lambda^*(n)}{M^{s+1}}.
\end{equation*}
Since 
\begin{align*}
p_\mu (m+1) &\in D \left( T^{B_\mu(m+1)} \right) = D \left( T^{B_\nu(m) + B_\mu(m)} \right). \\
p_\nu (m+1) &\in D \left( T^{B_\nu(m+1)} \right) = D \left( T^{B_\nu(m) } \right),
\end{align*}
we have
$$ p_\mu (m+1) - p_\nu (m+1) \ge \Delta \left( T^{B_\mu(m+1)} \right) .$$

(ii) $\pi_t^{(m)}(\nu ) = d$ and $\pi_b^{(m)}(\mu ) = d$: \\
Then we have $\pi_b^{(m+1)}(\nu ) = \pi_b^{(m)}(\nu ) < d$ and 
$\pi_b^{(m+1)}(\mu ) = \pi_b^{(m)}(\nu ) +1$.
Similarly with case (i), we have
$$q_\mu (m+1) - q_\nu (m+1) = \lambda_\nu (m+1) < (d-1) \frac{\lambda^*(n)}{M^{s+1}}.$$
Since 
\begin{align*}
q_\mu (m+1) &\in D \left( T^{-B_\mu(m+1)} \right) = D \left( T^{-B_\nu(m) - B_\mu(m)} \right) ,\\
q_\nu (m+1) &\in D \left( T^{-B_\nu(m+1)} \right) = D \left( T^{-B_\nu(m) } \right) ,
\end{align*} we have
$$ q_\mu (m+1) - q_\nu (m+1) \ge \Delta \left ( T^{- B_\mu(m+1)} \right )
= \Delta \left ( T^{B_\mu(m+1)} \right ) .$$
Note that $\Delta(T) = \Delta(T^{-1})$.
\end{proof}

\begin{proposition}\label{prop2}
If $\limsup_{k \to \infty} \| B(m_k,m_{k+1}) \| = \infty$, then $\liminf_{n \to \infty} n \cdot  \Delta (T^n) = 0$.
\end{proposition}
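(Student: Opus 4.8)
The plan is to manufacture, for infinitely many $n$, a letter $\alpha\in\A$ that is the winner of $\gamma(n-1,n)$ and the loser of $\gamma(n,n+1)$ and for which $\lambda_\alpha(n)$ is minuscule compared with $\lambda^*(n)$, and then to invoke Lemma~\ref{new}. Once that lemma applies with a parameter $M$ it produces some $1\le s<d$ with, writing $Q=\lfloor 2M^s/\lambda^*(n)\rfloor$,
$$ Q\cdot\Delta(T^{Q})\le \frac{2M^{s}}{\lambda^*(n)}\cdot(d-1)\,\frac{\lambda^*(n)}{M^{s+1}}=\frac{2(d-1)}{M}. $$
Since $\lambda^*(n)\le 1$ we have $Q\ge\lfloor 2M\rfloor$, so if this can be run with $M\to\infty$ along a sequence $n\to\infty$, then $Q\to\infty$ and $\liminf_n n\,\Delta(T^n)=0$.

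To feed the lemma, I would use $\limsup_k\|B(m_k,m_{k+1})\|=\infty$ to choose a subsequence along which $M_{k_j}:=\|B(m_{k_j},m_{k_j+1})\|\to\infty$, and set $L_j:=\sqrt{M_{k_j}}$. By Lemma~\ref{nnn}, for each $j$ there are a time $N_j\in\{m_{k_j},m_{k_j+1}\}$ and a letter $\beta_j\in\A$ with $\lambda_{\beta_j}(N_j)<\lambda^*(N_j)/L_j$. A pigeonhole on $\A$ lets me assume $\beta_j\equiv\beta$ is a fixed letter; since every letter wins infinitely often, $\beta$ wins at some step before $N_j$ for $j$ large, and every maximal run of $\beta$ is finite.

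The heart is the bridge from ``small at $N_j$'' to ``small at a winner--then--loser time'', which rests on two elementary observations. First, the arrows of a maximal run of $\beta$ all have the same type (say top, so $\pi_t$ stays fixed and $\beta=\pi_t^{-1}(d)$ throughout the run); hence the move immediately after such a run must be of the other type and has $\beta$ as its loser, so the right endpoint $q$ of any maximal $\beta$-run meets the hypothesis of Lemma~\ref{new} with $\alpha=\beta$. Second, along a $\beta$-run the quantity $\lambda^*-\lambda_\beta$ is constant, since at each step the winner absorbs the loser while $\lambda^*$ decreases by exactly the loser's length. Now split into cases. If $\beta$ is not the winner of $\gamma(N_j,N_j+1)$, take $q_j\le N_j$ to be the largest time with $\beta$ winning $\gamma(q_j-1,q_j)$; then $\beta$ does not win on $(q_j,N_j]$, so $\lambda_\beta(q_j)=\lambda_\beta(N_j)<\lambda^*(N_j)/L_j\le\lambda^*(q_j)/L_j$. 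If instead $\beta$ does win $\gamma(N_j,N_j+1)$, take $q_j>N_j$ to be the right endpoint of that ongoing run; then $\lambda_\beta(q_j)\le\lambda_\beta(N_j)<\lambda^*(N_j)/L_j$, while the invariance of $\lambda^*-\lambda_\beta$ on the run gives $\lambda^*(q_j)=\lambda^*(N_j)-\lambda_\beta(N_j)+\lambda_\beta(q_j)>\lambda^*(N_j)(1-1/L_j)$, so $\lambda_\beta(q_j)<\lambda^*(q_j)/(L_j-1)$. In either case $q_j\to\infty$, and $\beta$ is the winner of $\gamma(q_j-1,q_j)$, the loser of $\gamma(q_j,q_j+1)$, with $\lambda_\beta(q_j)<\lambda^*(q_j)/(L_j-1)$.

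Finally I apply Lemma~\ref{new} at $n=q_j$, $\alpha=\beta$, $M:=M_j:=(L_j-1)^{1/d}$: for $j$ large one has $M_j>d$, $q_j$ large, and $\lambda_\beta(q_j)<\lambda^*(q_j)/M_j^{d}$, so the lemma together with the computation above yields $Q_j\to\infty$ with $Q_j\,\Delta(T^{Q_j})<2(d-1)/M_j\to0$, hence $\liminf_{n\to\infty}n\,\Delta(T^n)=0$. I expect the only real obstacle to be this bridge, and within it the delicate subcase where $\beta$ lies in the interior of an ongoing winning run at $N_j$: a naive choice of $q_j$ would lose control of $\lambda^*(q_j)$, and the resolution is exactly that $\lambda^*$ can drop by at most $\lambda_\beta(N_j)$ along a $\beta$-run, which is negligible by Lemma~\ref{nnn}.
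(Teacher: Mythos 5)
Your proof is correct and follows essentially the same route as the paper's: apply Lemma~\ref{nnn} to locate a letter with length much smaller than $\lambda^*$ at some MMY time, trace that letter to a time where it is the winner of the incoming arrow and the loser of the outgoing arrow, and feed the resulting data into Lemma~\ref{new}.

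One remark on where you work harder than necessary: Lemma~\ref{nnn} produces not just \emph{some} letter with small length but the \emph{minimizer} $\beta$ of $\lambda_\beta(N_j)$, and a minimizer (by the Keane property, ties are excluded) can never be the winner of the next arrow. Thus your Case~2 (where $\beta$ wins $\gamma(N_j,N_j+1)$ and you track forward, using the invariance of $\lambda^*-\lambda_\beta$ along a run) never actually occurs; the paper exploits this directly and only needs the backward tracking, setting $\ell_k(\alpha)=\max\{n\le m_k: \alpha \text{ wins }\gamma(n-1,n)\}$ and observing $m_{k-1}<\ell_k(\alpha)$ because every letter wins in $\gamma(m_{k-1},m_k)$ by definition of the MMY acceleration. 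Your Case~2 argument is nevertheless correct, just superfluous; and your final computation $Q_j\,\Delta(T^{Q_j})\le 2(d-1)/M_j\to 0$ with $Q_j\to\infty$ makes explicit what the paper leaves implicit in the line ``which completes the proof.'' The one small point worth making explicit in your Case~1 is that $q_j\to\infty$ (you give the right reason: otherwise $\lambda_\beta(q_j)$ would take only finitely many positive values, contradicting $\lambda_\beta(q_j)=\lambda_\beta(N_j)<\lambda^*(N_j)/L_j\to 0$); the paper gets this for free from $\ell_k(\alpha)>m_{k-1}$.
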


\begin{proof}
For any give big $M > 0$, by Lemma~\ref{nnn}, there are infinitely many $k$ and $\alpha$ (depending on $k$) satisfying 
$$ \lambda_{\alpha}(m_k)= \min_{\beta \in \mathcal A} \lambda_{\beta}(m_k) < \frac{\lambda^* (m_k)}{M}.$$

Let $\ell_k(\alpha) = \max \{n \le m_k: \alpha \text{ is the winner of } \gamma(n-1,n)\}$. 
Since $ \lambda_{\alpha}(m_k)= \min_{\beta \in \mathcal A} \lambda_{\beta}(m_k)$,
$\alpha$ cannot be the winner of $\gamma(m_k,m_k +1)$. 
Thus $\alpha$ should be the winner of an arrow in $\gamma(m_{k-1},m_k)$, which yields
$$ m_{k-1} < \ell_k(\alpha) < m_k.$$ 
Thus,
$$ \lambda_{\alpha}(\ell_k(\alpha)) = \lambda_{\alpha}(m_k) < \frac{\lambda^* (m_k)}{M} \le \frac{\lambda^* (\ell_k(\alpha))}{M}.$$

Hence, we can choose infinitely many $n$'s satisfying the condition for Lemma~\ref{new}, which completes the proof. 
\end{proof}

\section{Bounded Zorich cocycle condition and the uniform recurrence condition}

In this section, we prove (ii) and (iii) of the main theorem.

The proof of (ii) and (iii) in the main theorem are directly obtained by the following propositions:

\begin{proposition}
If $n \cdot| T^n (x) - x | < c$ for some $x$, then we have $$\Delta(T^{2n}) < \frac{c}{n}$$
\end{proposition}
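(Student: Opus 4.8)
The plan is to produce two distinct points of $D(T^{2n})$ lying within distance $|T^n(x)-x|<c/n$ of one another. Set $t:=T^n(x)-x$. Since $T$ has the Keane property it is minimal, hence aperiodic, so $t\ne0$; I will argue for $t>0$, the case $t<0$ being symmetric (the left endpoint of the interval below replaces the right one). Let $J=[a,b)$ be the continuity interval of $T^n$ containing $x$, so that $T^n$ acts on $J$ as the translation $y\mapsto y+t$ and $T^n(J)=[a+t,b+t)$.

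The first step is to recall that for an i.e.m.\ one has $D(T^m)=\bigcup_{j=0}^{m-1}T^{-j}(D(T))$, so in particular
$$ D(T^{2n}) \ \supseteq\ D(T^n)\ \cup\ T^{-n}\bigl(D(T^n)\bigr). $$
Next, since $T^n(J)\subseteq[0,\lambda^*)$ and $t>0$ we must have $b+t\le\lambda^*$, so $b<\lambda^*$; thus $b$ is a genuine discontinuity of $T^n$ lying in $(0,\lambda^*)$, and hence $b\in D(T^{2n})$.

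The main case is $t<b-a$, that is, $t$ strictly less than the length of $J$. Then $b$ lies in the interior of $T^n(J)=[a+t,b+t)$, and since $T^{-n}$ carries $T^n(J)$ bijectively back onto $J$ by the translation $y\mapsto y-t$, we get $T^{-n}(b)=b-t$; moreover $a<b-t<b$, so $b-t$ is an interior point, and because $b\in D(T^n)$ it lies in $T^{-n}(D(T^n))\subseteq D(T^{2n})$. Hence $b$ and $b-t$ are two distinct discontinuities of $T^{2n}$ at distance $t<c/n$, so $\Delta(T^{2n})\le t<c/n$. In the complementary case $t\ge b-a$ the interval $J$ already has length $b-a\le t<c/n$; since $D(T^{2n})\supseteq D(T^n)$, the partition into continuity intervals of $T^{2n}$ refines that of $T^n$, so $J$ is a union of continuity intervals of $T^{2n}$, each of length $<c/n$, and again $\Delta(T^{2n})<c/n$. (This second case occurs for only finitely many $n$ once $c$ is fixed, since $T^n(x)-x$ ranges over a finite set as $x$ varies, so it is irrelevant for the $\liminf$ applications.)

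The only point I expect to require care is the bookkeeping at the domain endpoints $0$ and $\lambda^*$: the argument uses that $b$ (or $a$, in the symmetric case) is an honest interior discontinuity --- which is why one first checks that $t>0$ forces $b<\lambda^*$ --- and in the degenerate sub-case where $J$ abuts an endpoint one must invoke the usual convention that $0$ and $\lambda^*$ count among the discontinuities of an i.e.m. None of this affects the intended use of the proposition in bounding $\liminf_n n\,\Delta(T^n)$.
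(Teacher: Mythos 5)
Your proof is correct and follows essentially the same approach as the paper's: both identify the discontinuity $b$ of $T^n$ and its $T^n$-preimage $b-t$ (the paper's $b-\delta$) as two discontinuities of $T^{2n}$ at distance $|t|<c/n$, after first dispatching the case where the continuity interval is already shorter than $c/n$. Your explicit remarks that $t\neq 0$ by the Keane property and that $t>0$ forces $b<\lambda^*$ are small refinements of points the paper leaves implicit.
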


\begin{proof}
Let $[a,b)$ be the maximal interval containing $x$ on which $T^n$ is continuous. 
If $b-a < c/n$, then the proof is completed.

Suppose that $b-a \ge \frac cn$.
Let $\delta = T^n (x) - x$. Then
$|\delta| < \frac cn \le b-a$ and $T^n [a,b) = [a+\delta, b+ \delta)$.
Therefore, either $T^n(a-\delta) = a$ or $T^n(b-\delta) = b$ is a discontinuous point of $T^n$ depending on $\delta < 0$ or $\delta > 0$.
Hence, either $a-\delta$ or $b-\delta$ is a discontinuous point of $T^{2n}$,
which implies that $$\Delta(T^{2n}) \le |\delta| <\frac cn$$
since $a$ and $b$ are discontinuous point of $T^n$ or the end point of the interval.
\end{proof}

\begin{proposition}
If $n_{k+1} - n_k \ge d-1$ and $\| B(n_k, n_{k+1}) \| > 2d$, then
there exist $x$ and $m$ such that
$$
m \cdot | T^m (x) - x| < \frac{d}{\| B(n_k, n_{k+1}) \| - 2d}
$$
\end{proposition}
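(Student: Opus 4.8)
The plan is to read the bad pair $(x,m)$ straight off the block $\gamma(n_k,n_{k+1})$. All its arrows have a single winner $\alpha\in\A$; I will take $x$ anywhere in the corresponding subinterval $I_\alpha(n_k)$ and $m=B_\alpha(n_k)=\sum_\gamma B_{\alpha\gamma}(n_k)$, the first return time of $I_\alpha(n_k)$ to $[0,\lambda^*(n_k))$. The first step is to pin down the combinatorics of such a block. Since a top Rauzy move leaves the top permutation fixed, and the block can only end when the winner changes, every arrow of $\gamma(n_k,n_{k+1})$ is a top move (the bottom case is symmetric, exchanging $\pi_t$ and $\pi_b$). Thus $\alpha$ is last in the top order for every intermediate map, only $\lambda_\alpha$ varies along the block, and $\pi_b(\alpha)=:p$ is frozen because each move reinserts its loser just behind $\alpha$ in the bottom order. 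From this: the letters in front of $\alpha$ in the bottom order never lose; the losers are exactly the $s:=d-p\in\{1,\dots,d-1\}$ letters $S$ behind $\alpha$ (the case $S=\emptyset$ being excluded by admissibility, as $\alpha$ cannot be last in both orders), consumed in a cyclic order; the block's incidence matrix is $\mathbb I+\sum_{\beta\in S}c_\beta E_{\beta\alpha}$ with $c_\beta$ the number of times $\beta$ loses, so $L:=n_{k+1}-n_k=\sum_{\beta\in S}c_\beta=\|B(n_k,n_{k+1})\|-d$; and, since $L=n_{k+1}-n_k\ge d-1\ge s$, the cyclic pattern forces $\min_{\beta\in S}c_\beta\ge\lfloor L/s\rfloor\ge\lfloor L/(d-1)\rfloor$.

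Granting this the rest is quick. For any $x\in I_\alpha(n_k)$ one has $T^m(x)=T(n_k)(x)=x-p_\alpha(n_k)+q_\alpha(n_k)$, and since $\alpha$ is last in top, $p_\alpha(n_k)=\sum_{\beta\neq\alpha}\lambda_\beta(n_k)$, so $|T^m(x)-x|=p_\alpha(n_k)-q_\alpha(n_k)=\sum_{\pi_b(\beta)>\pi_b(\alpha)}\lambda_\beta(n_k)=\sum_{\beta\in S}\lambda_\beta(n_k)=:D>0$. Now two bounds: reading the $\alpha$-column of $\lambda(n_k)=\lambda(n_{k+1})B(n_k,n_{k+1})$ gives $\lambda_\alpha(n_k)=\lambda_\alpha(n_{k+1})+\sum_{\beta\in S}c_\beta\lambda_\beta(n_k)\ge(\min_{\beta\in S}c_\beta)\,D\ge\lfloor L/(d-1)\rfloor\,D$, while $\sum_\gamma B_\gamma(n_k)\lambda_\gamma(n_k)=1$ together with $d\ge2$ yields $m\,\lambda_\alpha(n_k)=B_\alpha(n_k)\lambda_\alpha(n_k)<1$. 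Therefore $m\,|T^m(x)-x|=mD<D/\lambda_\alpha(n_k)\le 1/\lfloor L/(d-1)\rfloor$, and since $\lfloor L/(d-1)\rfloor>(L-d+1)/(d-1)>0$ and $(d-1)(L-d)<d(L-d+1)$ --- both valid because $L-d=\|B(n_k,n_{k+1})\|-2d>0$ --- this is $<(d-1)/(L-d+1)\le d/(L-d)=d/\big(\|B(n_k,n_{k+1})\|-2d\big)$, which is the claim.

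The only delicate part is Step 1: one must check from the Rauzy--Veech combinatorics that a Zorich block uses a single type of move, that $\pi_b(\alpha)$ (hence the displacement $p_\alpha-q_\alpha$) is constant along it, and --- the crucial point --- that the losers are exactly the $d-\pi_b(\alpha)$ letters behind $\alpha$, lost cyclically, for this is what makes every $c_\beta$ at least $\lfloor L/(d-1)\rfloor$. After that only the standard identities $\lambda(n)=\lambda(n+1)B(n,n+1)$ and $\sum_\gamma B_\gamma(n)\lambda_\gamma(n)=1$ and elementary arithmetic are used. Finally it is worth noting where the hypotheses enter: $\|B(n_k,n_{k+1})\|>2d$ is what makes $d/(\|B(n_k,n_{k+1})\|-2d)$ a positive bound and keeps $L-d>0$ in the last inequality chain, and $n_{k+1}-n_k\ge d-1$ is exactly what guarantees $L\ge s$, so that the loser cycle closes and $S$ is exhausted.
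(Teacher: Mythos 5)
Your proof is correct and follows essentially the same route as the paper: both pick $x\in I_\alpha(n_k)$ with $\alpha$ the fixed winner of the Zorich block, take $m=B_\alpha(n_k)$ (the return time), identify $|T^m(x)-x|$ with $\sum_{\beta\in S}\lambda_\beta(n_k)$ where $S$ is the cyclically-visited set of losers, use the cyclic structure and $n_{k+1}-n_k\ge d-1$ to lower-bound how often each $\beta\in S$ loses, and finish with $B_\alpha(n_k)\lambda_\alpha(n_k)<1$. The only deviation is bookkeeping: you bound $\min_\beta c_\beta\ge\lfloor L/(d-1)\rfloor$ and run a chain of inequalities on $L=\|B\|-d$, whereas the paper sets $h=\lfloor L/|S|\rfloor$ and uses $\|B\|<d(h+2)$; both reach the same constant $d/(\|B(n_k,n_{k+1})\|-2d)$.
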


\begin{proof}
Let $\alpha \in \mathcal A$ be the winner of the arrows and $\mathcal A'$ be the set of the losers of the arrows in the path $\gamma(n_k, n_{k+1})$.
If $\pi^{(n_k)}_t (\alpha) = d$, then 
$ \mathcal A' = \{ \beta \in \mathcal A : \pi^{(n_k)}_b (\beta) > \pi^{(n_k)}_b (\alpha) \}$
and $\pi^{(n)}_b$ is the cyclic permutation on $\mathcal A'$ for $n_k \le n \le n_{k+1}$.
For each $\beta \in \mathcal A'$ put $h_\beta = B_{\beta \alpha}(n_k,n_{k+1})$,
the number of arrows, of which loser is $\beta \in \mathcal A'$, in the path $\gamma(n_k, n_{k+1})$.
Put $$h := \left \lfloor \frac{n_{k+1} - n_k}{|\mathcal A'|} \right \rfloor \ge 1.$$ 
Then $h \le h_\beta \le h+1$ for all $\beta \in \mathcal A'$ and
\begin{equation*}
\| B(n_k,n_{k+1}) \| = d + n_{k+1} - n_k \le d + (h+1) \cdot |\mathcal A'|
< d(h +2). \end{equation*}
Let 
$$\frac{\lambda_{\alpha} (n_k)}{h}
> \frac{\lambda_{\alpha} (n_k) - \lambda_{\alpha} (n_{k+1})}{h}
= \frac{\sum_{\beta \in \mathcal A'} h_\beta \lambda_\beta(n_k)}{h}
\ge \sum_{\beta \in \mathcal A'} \lambda_\beta(n_k). $$

Let $m = B_{\alpha} (n_k)$. Then on $ x \in I_{\alpha} (n_k)$ we have
$$ | T^m (x) - x | =  \sum_{\beta \in \mathcal A'} \lambda_\beta(n_k)
< \frac{\lambda_{\alpha} (n_k)}{h}.$$
Hence we have for $x \in I_{\alpha} (n_k)$
$$ m \cdot | T^m (x) - x | < B_{\alpha} (n_k) \cdot \frac{\lambda_{\alpha} (n_k)}{h} < \frac{B_{\alpha} (n_k) \lambda_{\alpha} (n_k)}{ \| B (n_k,n_{k+1}) \|/d -2} < \frac{1}{ \| B (n_k,n_{k+1}) \|/d -2}.$$

For the case $\pi^{(n_k)}_b (\alpha) = d$ we have the same procedure.
\end{proof}

\section{Examples}\label{exm}

The example satisfying condition (U) without (D) can be obtained by 3-interval exchanges.
Let $T$ be a 3 interval exchange map with permutation $\left( \begin{smallmatrix} A & B & C \\ C & B & A \end{smallmatrix} \right)$ and the length data $(\lambda_A, \lambda_B, \lambda_C)$.
Then $T$ is the induced transformation of the translation 
$$\bar T(x) = \begin{cases} 
x + \lambda_B + \lambda_C, & x + \lambda_B + \lambda_C < 1+ \lambda_B, \\
x - \lambda_A - \lambda_B, & x + \lambda_B + \lambda_C \ge 1+ \lambda_B \end{cases} $$
on the unit interval $[0,1)$.
Then condition (U) or (Z) are equivalent that $\bar T$ is of bounded type.

Let $T$ has the infinite path in the Rauzy diagram given by sequence of the winners of the arrows as follows 
$$ ABA (A^2 C^2)^{n_1} ABA (A^2 C^2)^{n_2} ABA (A^2 C^2)^{n_3} \cdots $$
with $n_k \to \infty$. 
Clearly, $T$ does not satisfies condition $(A)$.
However, the infinite path in the Rauzy diagram for $\bar T$ is 
$$
\bar A \bar C \bar A \bar C \bar A \bar C \bar A \bar C \cdots,
$$
if we consider $\bar T$ as the 2-interval exchange map with permutation $\left( \begin{smallmatrix} \bar A & \bar C \\ \bar C & \bar A \end{smallmatrix} \right)$ and the length data $(\lambda_{\bar A}, \lambda_{\bar C}) =  (\lambda_A + \lambda_B, \lambda_C + \lambda_B)$. 
Therefore, $\bar T$ is the rotation by the golden mean $g^{-1} = (\sqrt{5}-1)/2$. 
It follows that $T$ satisfies condition $(U)$. 

An example satisfying condition (Z) but not condition (U) is explained in \cite{Dio}: 
Let $T$ be the 4-interval exchange map with permutation data $\left( \begin{smallmatrix} A & B & D & C \\ D & A & C & B \end{smallmatrix} \right)$ with infinite path in the Rauzy diagram 
$$ CB^3 ( D^2 A^3 D)^{2^1}B \cdot CB^3 ( D^2 A^3 D)^{2^2}B \cdots
CB^3 ( D^2 A^3 D)^{2^k}B \cdots .$$
Then $T$ satisfies condition (Z) since there is no long sequence of the same winner.
However, it was shown in \cite[Section 9]{Dio} that for $x \in I_C(n)$, $n = \sum_{i=1}^k (5 + 6 \cdot 2^i) + 3 = 2k + 12 (2^k-1) + 3$, 
$$ n \cdot | T^n (x) - x| < \frac{2g^{2^{k+2}+5k}}{g^{2^{k+3}+k-4}}, $$
where $g = \frac 12 (\sqrt 5 + 1)$.

\section*{acknowledgement}
The authors wish to thank Corinna Ulcigrai and Luca Marchese for introducing their works and very helpful comments.

\end{document}